\newtheorem{thm}{Theorem}[section]
\newtheorem{lem}[thm]{Lemma}
\newtheorem{prop}[thm]{Proposition}
\newtheorem{cor}[thm]{Corollary}
\numberwithin{equation}{section}
\newcommand{\ie}[0]{\mathrm{i}}
\newcommand{\abs}[1]{\left\lvert#1\right\rvert}
\newcommand{\gfextn}[0]{pdf}
\begin{document}

\title{Explicit lower bounds on $\abs{L(1, \chi)}$}

\author[M.~J. Mossinghoff]{Michael J. Mossinghoff}
\address{Center for Communications Research, Princeton, NJ, USA}
\email{m.mossinghoff@idaccr.org}
\author[V.~V. Starichkova]{Valeriia V. Starichkova}
\address{School of Science, The University of New South Wales, Canberra, Australia}
\email{v.starichkova@adfa.edu.au}
\author[T.~S. Trudgian]{Timothy S. Trudgian}
\address{School of Science, The University of New South Wales, Canberra, Australia}
\email{t.trudgian@adfa.edu.au}
\thanks{Supported by Australian Research Council Future Fellowship FT160100094.}

\date\today
\subjclass[2010]{Primary: 11M20; Secondary: 11Y35, 42A05}
\keywords{Dirichlet character, Dirichlet $L$-function, explicit lower bounds.}

\begin{abstract}
Let $\chi$ denote a primitive, non-quadratic Dirichlet character with conductor $q$, and let $L(s, \chi)$ denote its associated Dirichlet $L$-function.
We show that $\abs{L(1, \chi)} \geq 1/(9.12255 \log(q/\pi))$ for sufficiently large $q$, and that $\abs{L(1, \chi)} \geq 1/(9.69030 \log(q/\pi))$ for all $q\geq2$, improving some results of Louboutin.
The improvements stem principally from the construction, via simulated annealing, of some real trigonometric polynomials having particularly favorable properties.
\end{abstract}

\maketitle

\section{Introduction}\label{almira}

Let $\chi$ denote a primitive, non-quadratic Dirichlet character with conductor $q$, and let $L(s,\chi)$ denote its associated $L$-function.
It is well known \cite[Thm.\ 11.4]{MV} that $L(1,\chi)$ for such characters satisfies
\[
\frac{1}{\log q} \ll \abs{L(1,\chi)} \ll \log q,
\]
and explicit inequalities are known for both the upper and lower bounds.
Explicit upper bounds on $\abs{L(1,\chi)}$ are studied for example in \citelist{\cite{Louboutin04}\cite{PE13}\cite{Ramare01}\cite{Ramare04}}; an explicit lower bound was first established by Louboutin in 1992 \cite{Louboutin92}, who proved that
\[
\abs{L(1,\chi)} \geq \frac{1+o(1)}{\sqrt{295}\log(q/\pi)}
\]
as $q\to\infty$.
Louboutin sharpened this estimate in 2015 \cite{Louboutin15}, showing that
\begin{equation}\label{orlando}
\abs{L(1,\chi)} \geq \frac{1+o(1)}{9.27629\log(q/\pi)}.
\end{equation}
It was also established there that a weaker inequality holds in a broader setting:
\begin{equation}\label{arminio}
\abs{L(1,\chi)} \geq \frac{1}{10\log(q/\pi)}
\end{equation}
for all $q\geq2$.
A key component of the proofs of \eqref{orlando} and \eqref{arminio} involved the selection of an auxiliary nonnegative real trigonometric polynomial having certain properties, and the author determined an optimal selection for qualifying polynomials of the form $(1+\alpha\cos\theta+\beta\cos(2\theta))^2$.
In this article, we obtain improved explicit lower bounds on $\abs{L(1,\chi)}$ by allowing trigonometric polynomials of larger degree.
We use simulated annealing to locate these (rare) polynomials.
We prove the following theorem.

\begin{thm}\label{agrippina}
Let $\chi$ denote a primitive, non-quadratic Dirichlet character with conductor $q$.
Then
\begin{equation}\label{giulio}
|L(1, \chi)| \geq \frac{1 + o(1)}{9.122545 \log(q/\pi)}
\end{equation}
as $q\to\infty$.
In addition,
\begin{equation}\label{cesare}
|L(1, \chi)| \geq \frac{1}{9.690292 \log(q/\pi)}
\end{equation}
for all $q\geq2$.
\end{thm}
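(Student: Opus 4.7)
The plan is to follow the framework of Louboutin's 2015 paper, which reduces the desired lower bound to the construction of a nonnegative real trigonometric polynomial $P(\theta) = \sum_{k=0}^d a_k \cos(k\theta)$ with $a_0, a_1 > 0$. The key point is that the nonnegativity of $P$, combined with the Euler product identity $\log L(s, \psi) = \sum_{p,n} \psi(p)^n/(n p^{ns})$, gives the positivity inequality
\[
\zeta(\sigma)^{a_0} \prod_{k=1}^d \abs{L(\sigma, \chi^k)}^{a_k} \geq 1 \qquad (\sigma > 1).
\]
Here $\chi^2$ is non-principal because $\chi$ is non-quadratic, and for $3 \leq k \leq d$ either $\chi^k$ is non-principal (when the order of $\chi$ exceeds $d$) or the offending terms can be grouped with the $\zeta$ factor; the finitely many characters of small order are controlled separately.

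The next step is to isolate $\abs{L(\sigma, \chi)}$ from this inequality. Setting $\sigma = 1 + \delta$ with $\delta \asymp 1/\log(q/\pi)$, substituting $\zeta(1+\delta) = 1/\delta + \gamma + O(\delta)$ together with explicit upper bounds of Louboutin and Ramar\'e type, $\abs{L(1+\delta, \chi^k)} \leq (c_k + o(1)) \log(q^k/\pi)$, and then transferring from $\sigma = 1+\delta$ back to $\sigma = 1$ via an integral representation (or a short-segment bound on $L'(s,\chi)$), I expect to arrive at an asymptotic bound of the form $\abs{L(1,\chi)} \geq (1+o(1))/(C(P)\log(q/\pi))$, where $C(P)$ is an elementary explicit function of the coefficients $(a_0, \ldots, a_d)$.

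The crux is then the minimization of $C(P)$ over admissible polynomials. Louboutin restricted attention to $P = (1 + \alpha\cos\theta + \beta\cos 2\theta)^2$, yielding $C(P) = 9.27629$; to do better, I would parameterize nonnegative trigonometric polynomials of higher degree via the Fej\'er--Riesz factorization $P(\theta) = \abs{Q(e^{\ie\theta})}^2$ and run simulated annealing on the coefficients of $Q$ to explore the highly nonconvex landscape, expecting the best polynomial found to deliver $C(P) = 9.122545$ as in~\eqref{giulio}. For the explicit constant $9.690292$ in~\eqref{cesare} valid for all $q \geq 2$, every $o(1)$ term must be bounded explicitly, and small conductors may need separate verification. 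The main obstacle is precisely this error analysis: a polynomial that is optimal in the asymptotic sense need not minimize the fully explicit constant, so a secondary simulated-annealing search balancing both objectives will likely be required.
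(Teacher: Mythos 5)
Your high-level strategy is right: you correctly identify Louboutin's framework, recognize that the nonnegativity of the trigonometric polynomial is the engine of the positivity argument, and propose to parameterize nonnegative polynomials via the Fej\'er--Riesz factorization $P = \abs{Q}^2$ and optimize with simulated annealing. This is exactly how the paper produces its degree-$32$ polynomial and the asymptotic constant in~\eqref{giulio} for characters of large order. You also correctly sense that~\eqref{cesare} ultimately rests on a finite computation over small conductors (Platt's verification up to $q_0 = 10^6$).

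There is, however, a genuine gap in your treatment of characters of small order, which you dismiss in one clause (``the finitely many characters of small order are controlled separately''). This is not a finite or negligible matter: Louboutin's Theorem (the paper's Theorem~\ref{serse}) requires the order of $\chi$ to exceed the degree $m$ of the polynomial, so raising $m$ to $32$ forces a separate argument for \emph{every} order $d \leq 32$, and for each such $d$ you must produce an admissible vector in $\mathbb{R}^{d}$ whose $\lambda$-value is at most the target. The key idea that makes this feasible --- and which your proposal misses --- is that for a fixed order $d$ one does not need $S(\vec{a},\theta) \geq 0$ for all $\theta$, only at the $d$-th roots of unity $\theta = 2\pi k/d$. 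This relaxation gives substantially smaller $\lambda$-values and requires a different optimization (the paper minimizes a penalty function over the finite point set, then refines), and it is a large fraction of the actual work. You also do not address the subtlety that $\chi^k(n)$ can vanish for some $k$ and not others when $\gcd(q,n) > 1$, which in Louboutin's setup forces the auxiliary conditions $S_d(\vec{a},\theta) \geq 0$ for all $d \leq m$; the paper's Proposition~\ref{teseo} and Corollary~\ref{ottone} show these follow automatically from $S(\vec{a},\theta) \geq 0$, but this must be proved, not assumed.

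Your last paragraph also mischaracterizes where the explicit constant $9.690292$ comes from. You suggest a secondary optimization ``balancing both objectives'' because an asymptotically optimal polynomial might not minimize the fully explicit constant. In fact the paper uses the \emph{same} polynomial for both bounds: since $F(s)$ in~\eqref{ezio} is decreasing on the relevant range and $s(q)$ is decreasing in $q$, one has $F(s(q)) \geq F(s(q_0))$ for all $q \geq q_0$, so~\eqref{cesare} is just~\eqref{giulio} with the $o(1)$ replaced by $F(s(q_0))$, combined with Platt's direct verification that $\abs{L(1,\chi)}\log(q/\pi) \geq 0.41292867$ for all $q \leq q_0 = 10^6$. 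No new search is required; the constant is a pure consequence of the range of the numerical verification.
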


Applications for lower bounds on $\abs{L(1,\chi)}$ occur in a number of settings, see for example \citelist{\cite{BW19}\cite{ParkKwon}\cite{OSYT}}.

We remark that the situation is quite different for quadratic characters: in that case it is known only that $|L(1, \chi)| \gg_\epsilon q^{-\epsilon}$ for arbitrary $\epsilon>0$ (see \cite[Thm.\ 11.14]{MV}).
We also add that much better upper and lower bounds on $\abs{L(1,\chi)}$ for both the quadratic and non-quadratic cases are known under the assumption of the Generalized Riemann Hypothesis (GRH)\@.
In that case, it is known that if $\chi$ is a primitive Dirichlet character, then
\begin{equation}\label{guistino}
\frac{12e^\gamma(1+o(1))}{\pi^2 \log\log q} \leq \abs{L(1, \chi)} \leq 2e^\gamma(1+o(1))(\log\log q),
\end{equation}
where $\gamma$ denotes the Euler--Mascheroni constant.
Results of this type date to the work of Littlewood \cite{Littlewood28}, who established the lower bound in \eqref{guistino} for non-principal quadratic characters and the upper bound for arbitrary non-principal Dirichlet characters.
More recent work provides fully explicit upper and lower bounds of the form \eqref{guistino}: such a result covering all non-principal primitive Dirichlet characters with conductor $q\geq10^{10}$ was established in \cite{LLS15}, and its range of validity was extended to conductors $q\geq3$ in \cite{LangTru}.

This article is organized in the following way.
Section~\ref{sosarme} summarizes Louboutin's method, and its requirements on the auxiliary trigonometric polynomials selected.
Section~\ref{floridante} describes our strategy for computing favorable trigonometric polynomials, which allow us to establish \eqref{giulio} for characters of large order.
Section~\ref{creta} then reports on a similar strategy for determining certain trigonometric polynomials that produce \eqref{giulio} for characters of small order.
Section~\ref{ariodante} establishes Theorem~\ref{agrippina}.

\section{Louboutin's Method}\label{sosarme}

We summarize Louboutin's method for obtaining explicit lower bounds on $\abs{L(1,\chi)}$ for a primitive, non-quadratic character $\chi$, and we add some additional observations that facilitate its application.
For $\vec{a}=(a_0, \ldots, a_m)\in\mathbb{R}^{m+1}$ with $m\geq1$, let $S(\vec{a}, \theta)$ denote the even trigonometric polynomial
\[
S(\vec{a}, \theta) = a_0 + 2\sum_{k=1}^m a_k \cos(k\theta),
\]
and similarly for a character $\chi$ and integer $n$ let $S(\vec{a}, \chi, n)$ denote
\begin{equation}\label{berenice}
S(\vec{a}, \chi, n) = a_0 + 2\sum_{k=1}^m a_k \Re(\chi^k(n)).
\end{equation}
If $\chi(n)\neq0$, then certainly $S(\vec{a}, \chi, n)=S(\vec{a}, \arg(\chi(n)))$, but if $\chi(n)=0$ one may still have $\chi^k(n)\neq0$: suppose $\chi$ has conductor $q$ and $\chi=\prod_{p\mid q} \chi_p$ with $p$ prime, where each $\chi_p$ has conductor $p^{e_p}$ for some positive $e_p$.
If $\chi_p$ has order $d_p$ then $\chi_p(p)=0$ but $\chi_p^{jd_p}(p)\neq0$ for integers $j$, in fact, $\chi_p^{k}(p)\neq0$ precisely when $k$ is an integral multiple of $d_p$.
Thus, $\chi^k(n)\neq0$ if and only if $d_p\mid k$ for each $p\mid\gcd(q,n)$.

We say a vector $\vec{a} = (a_0, \ldots, a_m)\in\mathbb{R}^{m+1}$ with $m\geq1$ is \textit{admissible} if $a_i\geq0$ for each $i$ and $a_0<2a_1$.
For an admissible $\vec{a}$, we set
\begin{equation}\label{deidamia}
\begin{split}
A &= \left(1-\frac{1}{\sqrt{5}}\right)\sum_{k=2}^m a_k,\\
c &= \frac{a_0 + 2a_1 + 4A + \sqrt{(2a_1 - a_0)^2 + 16(a_1 + A)^2}}{4(2a_1-a_0)},\\
s(q) &= 1+\frac{1}{c\log(q/\pi)},\\
\lambda &= \frac{12 a_1 c^2}{\pi^2e^{1/2c}\left((2a_1-a_0)c-(a_1+A)\right)}.
\end{split}
\end{equation}
We also define the function
\begin{equation}\label{ezio}
F(s) = \frac{\Gamma(s/2)\zeta(2s)}{\Gamma(1/2)\zeta(2)\zeta(s)(s-1)}
\end{equation}
and note that $F(s) = 1 + O(s-1)$.
Finally, we require a few definitions pertaining to constraints on the real number $s$ selected as input to $F$, in the presence of a given character $\chi$.
For this, we first define $\epsilon(\chi)$ to encode whether $\chi$ is an even or an odd character:
\[
\epsilon(\chi) = \begin{cases}
0, & \textrm{if $\chi(-1)=1$},\\
1, & \textrm{if $\chi(-1)=-1$}.
\end{cases}
\]
Next, for an admissible $\vec{a}\in\mathbb{R}^{m+1}$ and primitive Dirichlet character $\chi$, for $s>1$ let
\begin{gather*}
\tau(s) = \frac{1+\sqrt{1+4s^2}}{2},\\
G(\vec{a}, \chi, s) = \sum_{k=2}^m a_k\left(\frac{\Gamma'}{\Gamma}\left(\frac{s+\epsilon(\chi^k)}{2}\right)-\frac{1}{\sqrt{5}}\frac{\Gamma'}{\Gamma}\left(\frac{\tau(s)+\epsilon(\chi^k)}{2}\right)-\frac{2}{\sqrt{5}}\frac{\zeta'}{\zeta}\left(\tau(s)\right)\right).
\end{gather*}
(Note that $G$ is independent of $a_0$ and $a_1$.)
Then, given $\vec{a}$ and $\chi$, select $s_0>1$ so that
\[
G(\vec{a}, \chi, s) \leq 0
\]
for $1<s\leq s_0$.
We may now state Louboutin's main result.

\begin{thm}[Louboutin \cite{Louboutin15}]\label{serse}
Let $\chi$ be a primitive, non-quadratic Dirichlet character with conductor $q$ and order greater than $m$, let $\vec{a}\in\mathbb{R}^{m+1}$ be admissible, and suppose that $S(\vec{a}, \chi, n) \geq0$ for all integers $n$.
Suppose further that $s(q) \leq\min(s_0, 1.92326)$.
Then, using the notation from \eqref{deidamia} and \eqref{ezio},
\[
\abs{L(1,\chi)} \geq \frac{F(s(q))}{\lambda \log(q/\pi)} = \frac{1+o(1)}{\lambda \log(q/\pi)}.
\]
\end{thm}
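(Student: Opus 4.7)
The plan is to combine a positive Dirichlet-series identity (coming from the hypothesis $S(\vec{a},\chi,n)\ge 0$) with convexity-type upper bounds on $\abs{L(s,\chi^k)}$ for $k\ge 2$ and a short expansion relating $L(s,\chi)$ at $s=s(q)$ to $L(1,\chi)$. First, from the identity $\log L(s,\psi)=\sum_{p,j}\psi(p^j)/(jp^{js})$ together with the definition \eqref{berenice}, one has
\[
\log\!\Bigl(\zeta(s)^{a_0}\prod_{k=1}^m\abs{L(s,\chi^k)}^{2a_k}\Bigr)=\sum_p\sum_{j\ge 1}\frac{S(\vec{a},\chi,p^j)}{j\,p^{js}}\ge 0
\]
for every real $s>1$. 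Since the order of $\chi$ exceeds $m$, each $\chi^k$ with $1\le k\le m$ is non-principal, so each $L(s,\chi^k)$ is entire, and exponentiating yields the master inequality $\zeta(s)^{a_0}\prod_{k=1}^m\abs{L(s,\chi^k)}^{2a_k}\ge 1$.

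Second, I would produce upper bounds on $\abs{L(s,\chi^k)}$ for $k\ge 2$ via the functional equation of $L(s,\chi^k)$ paired with a Phragm\'en--Lindel\"of-type estimate that interpolates between the trivial bound at $\Re(s)=\tau(s)$ (where $\abs{L(\tau(s),\chi^k)}\le\zeta(\tau(s))$) and its reflected analogue at $1-\tau(s)$ obtained from the functional equation. The curve $\tau(s)$, defined by $\tau(\tau-1)=s^2$, is the natural balance point for the two ends: at $s=1$ one computes $\tau(1)=(1+\sqrt{5})/2$ and $\tau'(1)=2/\sqrt{5}$, and the weights $1/\sqrt{5}$ and $2/\sqrt{5}$ appearing in $A$ and $G$ emerge as the coefficients produced by differentiating this log-bound in $s$. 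The hypothesis $G(\vec a,\chi,s)\le 0$ on $(1,s_0]$ is then precisely the monotonicity needed to transport a sharpened form of the bound from $s=1^{+}$ up to $s=s(q)$.

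Third, I would relate $\abs{L(s,\chi)}$ to $\abs{L(1,\chi)}$ by partial summation in the defining Dirichlet series, controlling the incomplete character sums $\sum_{n\le x}\chi(n)$ via P\'olya--Vinogradov, to obtain for $s$ just above $1$
\[
\abs{L(s,\chi)}\le\abs{L(1,\chi)}+(s-1)\bigl(\tfrac12\log(q/\pi)+O(1)\bigr).
\]
Combining this with the upper bounds from step two and the elementary $\zeta(s)\sim 1/(s-1)$ yields a schematic inequality
\[
\bigl(\abs{L(1,\chi)}+(s-1)\tfrac12\log(q/\pi)+\cdots\bigr)^{2a_1}\ge(s-1)^{a_0}\cdot(q/\pi)^{-2A(s-1)+o(s-1)}\cdot F(s)^{C_0},
\]
where $F(s)$ packages the $\Gamma$- and $\zeta$-remnants at $s$ into a factor equal to $1+O(s-1)$. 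Taking $2a_1$-th roots, rearranging to isolate $\abs{L(1,\chi)}$, and minimizing the right-hand side in $s$ by a standard Lagrange-multiplier computation recovers exactly $s(q)=1+1/(c\log(q/\pi))$ with $c$ as in \eqref{deidamia}, and the resulting minimum value is the claimed $F(s(q))/(\lambda\log(q/\pi))$.

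The hard part will be the bookkeeping of constants in the second and third steps, since every piece of \eqref{deidamia} traces back to a specific constant in the argument: the factor $1-1/\sqrt{5}$ in $A$ comes from subtracting the $\zeta(\tau)$ contribution out of $\zeta(s)$, the $e^{1/(2c)}$ in $\lambda$ comes from $(q/\pi)^{(s(q)-1)/2}=e^{1/(2c)}$, and the denominator $(2a_1-a_0)c-(a_1+A)$ encodes the admissibility condition $a_0<2a_1$ needed for the Lagrange optimization to be non-degenerate. The technical restriction $s(q)\le\min(s_0,1.92326)$ is what guarantees the optimized $s(q)$ stays in the region where both the $\Gamma$-ratio monotonicity underlying the convexity bound and the hypothesis $G\le 0$ remain in force.
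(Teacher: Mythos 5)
The paper does \emph{not} prove Theorem~\ref{serse}: it is stated as Louboutin's result and imported wholesale from \cite{Louboutin15}, with no proof (or even proof sketch) given in this article. So there is no in-paper argument to compare against; I can only evaluate your reconstruction on its own terms.

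Your sketch captures the genuine skeleton of Louboutin's argument. The ``master inequality'' $\zeta(s)^{a_0}\prod_{k=1}^m\abs{L(s,\chi^k)}^{2a_k}\ge 1$ obtained by summing the Euler-product logarithms against the nonnegative weights $S(\vec{a},\chi,p^j)$ is indeed the correct and crucial first step, and the requirement that $\chi$ have order greater than $m$ is used exactly where you use it (to ensure each $\chi^k$, $1\le k\le m$, is non-principal so $L(s,\chi^k)$ is regular at $s=1$). Your identification of $\tau(s)$ as the balance curve defined by $\tau(\tau-1)=s^2$, with $\tau(1)=(1+\sqrt5)/2$ and $\tau'(1)=2/\sqrt5$ producing the constants $1/\sqrt5$ and $2/\sqrt5$ in $A$ and $G$, is right, as is the final optimization in $s$ recovering $s(q)$ and $\lambda$.

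The one place I would push back is step three. Invoking partial summation together with P\'olya--Vinogradov to relate $\abs{L(s,\chi)}$ to $\abs{L(1,\chi)}$ is not Louboutin's mechanism, and it would not deliver the clean constant $\tfrac12\log(q/\pi)$ (P\'olya--Vinogradov gives $\sqrt{q}\log q$-type savings, which after partial summation leaves a $\log q$ with a different, and non-sharp, constant). In Louboutin's argument the relation between $\abs{L(s,\chi)}$ and $\abs{L(1,\chi)}$, and likewise the convexity bound on each $\abs{L(s,\chi^k)}$, both come from the completed $L$-function $\Lambda(s,\chi)=(q/\pi)^{s/2}\Gamma\bigl((s+\epsilon(\chi))/2\bigr)L(s,\chi)$, its functional equation, and its Hadamard product; that is precisely where the $(q/\pi)^{(s-1)/2}$ factor (hence the $e^{1/(2c)}$ in $\lambda$), the $\Gamma$-ratios appearing in $F$ and $G$, and the explicit constants of \eqref{deidamia} come from, and why the derivative hypothesis $G(\vec{a},\chi,s)\le 0$ enters as a monotonicity condition. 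So while your high-level plan is sound, executing step three as written would leave you short of the precise constants the theorem asserts; the functional-equation route is needed there, not the character-sum route.
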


Louboutin showed in \cite[Lem.\ 10]{Louboutin15} that if $\chi$ is even or if $m=2$, then $s_0=2.97675$ suffices.
We first augment this by showing that a similar selection suffices for odd characters.

\begin{lem}\label{rodrigo}
Let $\chi$ be a primitive, non-quadratic odd Dirichlet character with order $m\geq2$, let $\vec{a}\in\mathbb{R}^{m+1}$ be admissible, and suppose $S(\vec{a},\chi,-1) \geq 0$.
Then $G(\vec{a}, \chi, s)\leq0$ for $1<s\leq 2.28266$.
\end{lem}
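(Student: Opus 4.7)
The plan is to decompose $G(\vec{a},\chi,s)$ according to the parity of the exponent $k$. Since $\chi$ is odd, $\chi^k(-1)=(-1)^k$, so $\epsilon(\chi^k)=0$ when $k$ is even and $\epsilon(\chi^k)=1$ when $k$ is odd. Setting
\[
\Phi_j(s)=\frac{\Gamma'}{\Gamma}\!\left(\frac{s+j}{2}\right)-\frac{1}{\sqrt{5}}\frac{\Gamma'}{\Gamma}\!\left(\frac{\tau(s)+j}{2}\right)-\frac{2}{\sqrt{5}}\frac{\zeta'}{\zeta}(\tau(s))
\]
for $j\in\{0,1\}$, and writing $E=\sum_{k\geq 2,\,k\text{ even}}a_k$ and $O=\sum_{k\geq 3,\,k\text{ odd}}a_k$, the definition of $G$ collapses to
\[
G(\vec{a},\chi,s)=E\,\Phi_0(s)+O\,\Phi_1(s).
\]

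Next I would turn the hypothesis into a scalar inequality relating $E$ and $O$. Evaluating \eqref{berenice} at $n=-1$, where $\Re(\chi^k(-1))=(-1)^k$, gives $S(\vec{a},\chi,-1)=a_0-2a_1+2E-2O$, so the assumption $S(\vec{a},\chi,-1)\geq 0$ combined with the admissibility condition $a_0<2a_1$ forces $O<E$. Since $E,O\geq 0$, bounding $G$ by $E\,\Phi_0$ when $\Phi_1\leq 0$ and by $E(\Phi_0+\Phi_1)$ when $\Phi_1>0$ reduces the lemma to the two one-variable inequalities
\[
\Phi_0(s)\leq 0\qquad\text{and}\qquad\Phi_0(s)+\Phi_1(s)\leq 0
\]
on $1<s\leq 2.28266$. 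The first is precisely \cite[Lem.\ 10]{Louboutin15}, established there on the larger interval $1<s\leq 2.97675$, so no new work is needed for it.

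The binding constraint is therefore the second inequality, and the constant $2.28266$ should be (to the stated precision) the unique root of $\Phi_0+\Phi_1$ in $(1,\infty)$. Applying the digamma duplication formula
\[
\frac{\Gamma'}{\Gamma}\!\left(\frac{x}{2}\right)+\frac{\Gamma'}{\Gamma}\!\left(\frac{x+1}{2}\right)=2\frac{\Gamma'}{\Gamma}(x)-2\log 2
\]
once with $x=s$ and once with $x=\tau(s)$, the quantity $\Phi_0(s)+\Phi_1(s)$ simplifies to
\[
2\frac{\Gamma'}{\Gamma}(s)-\frac{2}{\sqrt{5}}\frac{\Gamma'}{\Gamma}(\tau(s))-\frac{4}{\sqrt{5}}\frac{\zeta'}{\zeta}(\tau(s))-2\!\left(1-\frac{1}{\sqrt{5}}\right)\log 2,
\]
a smooth function of a single real variable. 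The main obstacle is the rigorous verification that this expression is non-positive throughout $1<s\leq 2.28266$. I would first establish monotonicity on this range using the standard series expansions of $\Gamma'/\Gamma$ and $\zeta'/\zeta$, together with the smoothness and monotonicity of $\tau$, and then check the value at $s=2.28266$ and the right-hand limit as $s\downarrow 1$; a finite-grid interval-arithmetic sweep would turn this into a rigorous estimate. Once $\Phi_0+\Phi_1\leq 0$ is in hand, the two-case split above yields $G(\vec{a},\chi,s)\leq 0$ on the desired interval.
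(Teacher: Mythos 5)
Your argument follows the paper's proof in all essentials: decompose $G$ by parity of $k$ into $E\,\Phi_0 + O\,\Phi_1$ (your $\Phi_j$ being the paper's $G_j$), extract $O < E$ from $S(\vec{a},\chi,-1)\geq 0$ together with the admissibility condition $a_0 < 2a_1$, and reduce the claim to one-variable sign inequalities to be verified numerically on $(1, 2.28266]$. The only minor deviations are that you split on the sign of $\Phi_1$ rather than separately verifying $G_1\geq 0$, and you simplify $\Phi_0+\Phi_1$ via the digamma duplication formula to a single $\Gamma'/\Gamma$ term; both are harmless refinements of the same bookkeeping the paper carries out.
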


\begin{proof}
Since $\chi$ is odd, we have that $\epsilon(\chi^k)$ is $0$ when $k$ is even and $1$ when $k$ is odd.
Thus
\[
G(\vec{a},\chi,s) = G_0(s)\sum_{\substack{2\leq k\leq m\\\textrm{$k$ even}}} a_k + G_1(s)\sum_{\substack{2\leq k\leq m\\\textrm{$k$ odd}}} a_k,
\]
where
\begin{align*}
G_0(s) &= \frac{\Gamma'}{\Gamma}\left(\frac{s}{2}\right)-\frac{1}{\sqrt{5}}\frac{\Gamma'}{\Gamma}\left(\frac{\tau(s)}{2}\right)-\frac{2}{\sqrt{5}}\frac{\zeta'}{\zeta}\left(\tau(s)\right),\\
G_1(s) &= \frac{\Gamma'}{\Gamma}\left(\frac{s+1}{2}\right)-\frac{1}{\sqrt{5}}\frac{\Gamma'}{\Gamma}\left(\frac{\tau(s)+1}{2}\right)-\frac{2}{\sqrt{5}}\frac{\zeta'}{\zeta}\left(\tau(s)\right).
\end{align*}
One may verify that $G_0(s)\leq0$, $G_1(s)\geq0$, and $G_1(s)\leq-G_0(s)$ for $1<s\leq2.28266$.
Thus, the desired inequality $G(\vec{a}, \chi, s)\leq0$ for $s$ in the prescribed range follows from the inequality
\[
\sum_{\substack{2\leq k\leq m\\\textrm{$k$ odd}}} a_k \leq
\sum_{\substack{2\leq k\leq m\\\textrm{$k$ even}}} a_k,
\]
and this is established easily by combining the given facts that $S(\vec{a},\chi,-1) \geq 0$ and $a_0 < 2a_1$.
\end{proof}

We may therefore replace the quantity $\min(s_0, 1.92326)$ in Theorem~\ref{serse} with $1.92326$ for the cases of interest.

Using Theorem~\ref{serse}, Louboutin established \eqref{orlando} in two principal steps.
First, he required a real trigonometric polynomial $S(\vec{a},\theta)$, with $\vec{a}\in\mathbb{R}^{m+1}$ admissible, with the property that $S(\vec{a},\theta)\geq0$ for all real $\theta$.
In fact, an apparently stronger condition was required for $S(\vec{a},\theta)$ in light of the discussion after \eqref{berenice}, since $\chi^k$ might vanish on all but an arithmetic progression of integers $k$.
In \cite{Louboutin15} it was therefore required more generally that each of the polynomials
\begin{equation}\label{admeto}
S_d(\vec{a},\theta) = a_0 + \sum_{\substack{1\leq k\leq m\\d\mid k}} a_k\cos(k\theta)
\end{equation}
satisfy
\begin{equation}\label{hercules}
S_d(\vec{a},\theta) \geq 0
\end{equation}
for all real $\theta$, for $1\leq d\leq m$.
Louboutin established in \cite[Lem.\ 15]{Louboutin15} that if \eqref{hercules} holds for each such $d$, then $S(\vec{a},\chi,n)\geq0$ for all integers $n$ and all primitive non-quadratic Dirichlet characters $\chi$ with order greater than $m$.

We note here however that it is enough to establish the case $d=1$ of \eqref{hercules}, that is, that $S(\vec{a},\theta)\geq0$ for all real $\theta$.
The other cases follow automatically, as shown by the following averaging argument.

\begin{prop}\label{teseo}
Suppose $\vec{a}\in\mathbb{R}^{m+1}$, $D\leq m$ is a positive integer, and that there exist real numbers $\theta_0$, $L$, and $U$ such that $L\leq S(\vec{a}, \theta_0+2 \pi k/D)\leq U$ for $0\leq k<D$.
Then for each $1\leq d \leq D$ with $d\mid D$ we have
\[
L \leq S_d\left(\vec{a}, \theta_0 + \frac{2 \pi k}{d}\right) \leq U
\]
for $0\leq k<d$.
\end{prop}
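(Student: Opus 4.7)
The plan is to express $S_d(\vec{a},\cdot)$ as an arithmetic mean of $d$ equally spaced shifts of $S(\vec{a},\cdot)$, and then arrange for those shifts to lie among the $D$ evaluation points supplied by the hypothesis. The main tool is the standard orthogonality relation
\[
\frac{1}{d}\sum_{l=0}^{d-1}\cos\!\left(j\phi+\frac{2\pi jl}{d}\right) =
\begin{cases} \cos(j\phi), & d\mid j,\\ 0, & d\nmid j,\end{cases}
\]
obtained by summing the geometric series $\sum_{l=0}^{d-1}e^{2\pi\ie jl/d}$. Substituting this into the definition of $S(\vec{a},\cdot)$ and interchanging the two summations produces an identity of the form
\[
\frac{1}{d}\sum_{l=0}^{d-1} S\!\left(\vec{a},\,\phi+\frac{2\pi l}{d}\right) = S_d(\vec{a},\phi),
\]
valid for every $\phi\in\mathbb{R}$. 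This reduces the proposition to checking that, for each admissible $\phi$ of the form $\theta_0+2\pi k/d$, the $d$ arguments appearing on the left lie among the $D$ points $\theta_0+2\pi j/D$ of the hypothesis.

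To verify this, I would fix $0\le k<d$ and set $\phi=\theta_0+2\pi k/d$. Writing $D=de$ (possible since $d\mid D$), the arguments become
\[
\theta_0+\frac{2\pi(k+l)}{d} = \theta_0+\frac{2\pi(k+l)e}{D}, \qquad l=0,1,\dots,d-1.
\]
The indices $(k+l)e\bmod D$ take $d$ distinct values in $\{0,1,\dots,D-1\}$, since the shifts $le$ for $0\le l<d$ are distinct modulo $de=D$. Using $2\pi$-periodicity of $S(\vec{a},\cdot)$, each summand then equals $S(\vec{a},\theta_0+2\pi j/D)$ for some $j$ in the prescribed range, and therefore lies in the interval $[L,U]$ by hypothesis. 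Averaging these $d$ bounds yields $L\le S_d(\vec{a},\theta_0+2\pi k/d)\le U$, as required, with the lower and upper bounds handled symmetrically.

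There is no genuine obstacle here: the whole argument is a routine finite-Fourier averaging argument, and each step is essentially mechanical. The only nontrivial point is the index bookkeeping in the second paragraph, which is precisely where the divisibility hypothesis $d\mid D$ enters. I expect the finished proof to be short, fitting comfortably within a handful of lines.
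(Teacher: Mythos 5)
Your proof is correct and takes essentially the same route as the paper's: both express $S_d(\vec a,\cdot)$ as the average $\frac1d\sum_{\ell=0}^{d-1}S(\vec a,\phi+2\pi\ell/d)$ via the geometric-sum orthogonality relation and then conclude from the hypothesis. You are somewhat more explicit about the index bookkeeping and the use of $2\pi$-periodicity to place the $d$ shifted arguments among the $D$ hypothesis points, a detail the paper's proof leaves implicit; otherwise the arguments coincide.
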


\begin{proof}
Suppose $d$ is a positive integer with $d\mid D$.
Then $L\leq S(\vec{a}, \theta_0 + 2 \pi \ell/d)\leq U$ for $0\leq \ell<d$, and
\begin{align*}
\frac{1}{d} \sum_{\ell=0}^{d-1} {S\left(\vec{a}, \theta_0 + \frac{2 \pi \ell}{d}\right)}
&= \frac{1}{d} \Re \sum_{\ell=0}^{d-1} \sum_{j=0}^m a_j e^{\ie j(\theta_0+2\pi \ell/d)}\\
&= \frac{1}{d} \Re\sum_{j = 0}^{m} {a_j e^{\ie j \theta_0}} \sum_{\ell=0}^{d-1} {e^{2 \pi j\ell \ie/d}} = S_d(\vec{a}, \theta_0),
\end{align*}
so the result follows after replacing $\theta_0$ by $\theta_0 + 2 \pi k/d$, $0 \leq k < d$, in the formula above.
\end{proof}

Our required result then follows easily.

\begin{cor}\label{ottone}
If $\vec{a}\in\mathbb{R}^{m+1}$ and $S(\vec{a},\theta)\geq0$ for all real $\theta$, then for $1\leq d\leq m$ we have $S_d(\vec{a},\theta)\geq0$ for all real $\theta$.
\end{cor}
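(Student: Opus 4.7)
The plan is to derive the corollary as an immediate consequence of Proposition~\ref{teseo}. Fix any real number $\theta$ and any integer $d$ with $1\leq d\leq m$, and apply the proposition with $D=d$, $\theta_0=\theta$, and $L=0$. The hypothesis $S(\vec{a},\cdot)\geq 0$ ensures that $L=0$ is a valid lower bound for the $d$ values $S(\vec{a},\theta+2\pi k/d)$ with $0\leq k<d$, and since $S(\vec{a},\cdot)$ is bounded above (for instance by $a_0+2\sum_{k=1}^m |a_k|$), a suitable upper bound $U$ certainly exists.

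With these choices, Proposition~\ref{teseo} (taking the divisor $d$ of $D=d$) yields $S_d(\vec{a},\theta+2\pi k/d)\geq 0$ for $0\leq k<d$, and in particular $S_d(\vec{a},\theta)\geq 0$. Since $\theta$ was arbitrary and $d$ was an arbitrary divisor target in the stated range, this gives $S_d(\vec{a},\theta)\geq 0$ for all real $\theta$ and all $1\leq d\leq m$, as claimed.

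No genuine obstacle arises here: the essential content has already been packaged into Proposition~\ref{teseo}, whose underlying averaging identity exhibits $S_d(\vec{a},\theta_0)$ (up to a normalization) as the mean of the $d$ values $S(\vec{a},\theta_0+2\pi\ell/d)$ for $0\leq\ell<d$, and hence pointwise nonnegativity of $S$ descends automatically to every $S_d$.
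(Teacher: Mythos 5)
Your proof is correct and matches the paper's argument: both invoke Proposition~\ref{teseo} with $D=d$, $L=0$, and an arbitrary sufficiently large $U$, then read off nonnegativity of $S_d$. The only cosmetic difference is that the paper restricts $\theta_0$ to $[0,2\pi/d)$ using the $2\pi/d$-periodicity of $S_d$, whereas you let $\theta_0$ range over all of $\mathbb{R}$; this changes nothing of substance.
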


\begin{proof}
Select $L=0$ in Proposition~\ref{teseo} (and $U$ sufficiently large) and apply the statement for $\theta_0\in[0,2\pi/d)$.
\end{proof}

In \cite{Louboutin15}, Louboutin chose $m=4$ and considered trigonometric polynomials of the form
\begin{equation}\label{poro}
S(\vec{a},\theta) = \left(1+\alpha\cos(\theta)+\beta\cos(2\theta)\right)^2,
\end{equation}
selecting $\alpha$ and $\beta$ in order to minimize the value of $\lambda$ while maintaining admissibility, as well as the nonnegativity of $S(\vec{a},\theta)$.
The values $\alpha=1.601$ and $\beta=0.709$ were found to produce $c=3.42289\ldots$ and $\lambda=9.27628\ldots$ in \eqref{orlando}.
This established \eqref{orlando} for characters of order greater than $4$.

Second, special polynomials were required to cover the cases of characters with order at most $m=4$.
For example, for order $4$ the selection was $\vec{a}=(2,2,1)$ so that $S(\vec{a},\chi,n)=2+4\Re(\chi(n))+2\Re(\chi^2(n))$, which is nonnegative for characters $\chi$ of order $4$, since $\chi(n)\in\{1,-1,i,-i\}$.
This choice produced $c=2.32703\ldots$ and $\lambda=5.04595\ldots$\,, which is well below the required bound of $9.27628$.

In the closing remarks of \cite{Louboutin15}, Louboutin proposed that one might improve \eqref{orlando} by analyzing another family of nonnegative real trigonometric polynomials besides \eqref{poro}, for example, something with three or more parameters.
Indeed we take such an approach here, but with polynomials of substantially larger degree, producing many more parameters to optimize over.
In the next section we describe the construction of a favorable nonnegative trigonometric polynomial $S(\vec{a},\theta)$ with $\vec{a}\in\mathbb{R}^{m+1}$ admissible, where $m=32$.
Corollary~\ref{ottone} greatly simplifies our constraints: we need only ensure that $S(\vec{a},\theta)\geq0$ on $\mathbb{R}$ for our choice of $\vec{a}$.
Then in Section~\ref{creta} we must cover the characters with order at most $32$ using special polynomials, which require some additional searches.

\section{Characters with large order}\label{floridante}

Given an integer $m\geq2$, we would like to determine an admissible $\vec{a}\in\mathbb{R}^{m+1}$ so that the associated real trigonometric polynomial $S(\vec{a},\theta)\geq0$ for all real $\theta$, and so that the constant $\lambda$ calculated from \eqref{deidamia} is as small as possible.
If $m$ is small, we could employ analytic strategies to determine an optimal polynomial, but we would explore only a small portion of the available space.
With a larger value of $m$, we might hope to find better solutions, but direct analysis may not be so tractable.
We can proceed however by using numerical optimization methods.
We employ the method of simulated annealing here to determine vectors $\vec{a}$ of substantial length that achieve small values for $\lambda$.

In our method, for a fixed degree $m$ we begin by constructing an admissible $\vec{a}\in\mathbb{R}^{m+1}$ for which the polynomial $S(\vec{a},\theta)\geq0$ for all real $\theta$.
To achieve this starting point, we construct our polynomial as the square of the modulus of a randomly constructed trigonometric polynomial.
Set $b_0=1$, and for $1\leq k\leq m$ choose a random value for $b_k$ uniformly from the real interval $[0,B]$, where $B$ is a parameter that may be specified at run time.
Let
\[
g(\theta) = \Big|\sum_{k=0}^m b_k e^{ik\theta}\Big|^2.
\]
We may then compute $a_0$, \ldots, $a_m$ so that $S(\vec{a},\theta)=g(\theta)$: the $a_k$ are simply the (aperiodic) autocorrelations of the $b_k$:
\[
a_k = \sum_{j=0}^{m-k} b_j b_{j+k}.
\]
Each $a_k$ is nonnegative by construction, so we need only ensure that $a_0<2a_1$ for $\vec{a}$ to be admissible.
If this is not the case, we simply restart and select new values for $b_1$, \ldots, $b_m$.
We then proceed with the optimization phase.

In simulated annealing, one aims to optimize an objective function over a given space by an exploratory process: in our application we aim to minimize the value of $\lambda$ from \eqref{orlando} over the subset of $\mathbb{R}^{m+1}$ consisting of admissible points where $S(\vec{a},\theta)\geq0$ on $\mathbb{R}$.
At each step, one moves from one's current point in the space to a nearby point, and evaluates the objective function there.
If this produces a smaller value for $\lambda$, one makes this the new current point and proceeds to the next iteration.
If it produces a larger value, then one moves to the new point only with a particular probability, which depends on the current value of another parameter of the method known as the \textit{temperature}.
If the current temperature is $T$, and the change witnessed in our objective function is $\Delta\lambda>0$, then we keep this step with probability $e^{-(\Delta\lambda)/T}$.
In early stages of the method, the temperature is set to a high value, so that we keep many steps that make the value of our objective function worse.
This promotes exploration of the space, and helps prevent the optimizer from stalling at a local minimum.
As the method proceeds, the temperature is gradually decreased, so that we move more often in ways that improve the value of the objective function.
A final phase in effect sets $T=0$, so that one employs strict greedy descent.

Our method depends on the values of several parameters.
In addition to the coefficient bound $B$ described above, one must specify:
\begin{itemize}
\item $S_1$ and $S_2$, the largest and smallest allowable maximum step sizes,
\item $\rho>0$, for controlling changes to the maximum step size,
\item $\ell$, the number of positive temperature values considered,
\item the annealing schedule $T_1>\cdots>T_\ell>T_{\ell+1}=0$, and
\item $M$, the number of simulated annealing trials per temperature value and per maximum step size.
\end{itemize}
In our method, after selecting qualifying $b_1$, \ldots, $b_m$ and calculating the associated $a_0$, \ldots, $a_m$, we start by setting the maximum step size $S=S_1$ and the temperature $T=T_0$.
In each of $M$ trials we then select an integer $k\in[1,m]$ and a real value $s\in[-S,S]$ uniformly at random, and add $s$ to $b_k$.
We then update each component of $\vec{a}$ to reflect this change: this requires just $O(m)$ work.
If the new value of $\vec{a}$ is not admissible, we undo the change and proceed to the next iteration.
If it is admissible, we know $S(\vec{a},\theta)\geq0$ everywhere by construction, so we compute the value of $\lambda$ at this new point.
We then either keep the new point or undo this step, depending on the sign of $\Delta\lambda$ and the value of the current temperature, according to the rules for simulated annealing.

After $M$ steps at the current temperature $T_i$, we perform another $M$ steps at the next value in the annealing schedule, $T_{i+1}$, and continue this until we complete the batch at $T=T_{\ell+1}=0$.
After this, we divide the value of the maximum step size $S$ by $1+\rho$ and repeat the procedure, starting again with $T=T_1$, provided the new value of $S$ exceeds $S_2$.
When $S$ dips below this value, the process halts.
Throughout, we display the state of the $a_k$ (suitably normalized) and the value of the objective function $\lambda$ whenever we achieve a new record which lies below a given threshold of interest.

We performed more than $10^5$ trials of our procedure at several values of $m$.
The parameters $B$, $S_1$, $S_2$, $\rho$, $M$, and the annealing schedule $T_1$, \ldots, $T_\ell$ varied across these runs, but we often selected $B\in[100,300]$, $S_1\in[2.7,4.5]$, $S_2=10^{-6}$, $\rho\in[.01,.025]$, $\ell\in[8,12]$, $M\in[2000,12000]$, $T_0$ near $0.1$, and $1/T_{i+1}=1/T_i+\delta$ with $\delta$ selected from a small range such as $[1.5,1.75]$.

Table~\ref{alceste} lists the best values found by our method for several values of $m\leq32$.
We employ the value shown here for $m=32$ to establish \eqref{giulio} in Theorem~\ref{agrippina} for characters with order larger than $32$.
The coefficients $\vec{a}$ for this case appear in Figure~\ref{semele}, and the minuscule oscillations of the corresponding real trigonometric polynomial $S(\vec{a},\theta)$ over $[\pi/2,\pi]$ are illustrated in Figure~\ref{galatea}.

\begin{table}[tb]
\caption{Best values for $\lambda$ found at several values of $m$ ($\lambda$ values truncated).}\label{alceste}
\begin{tabular}{|rc|}\hline
$m$ & $\lambda$\\\hline
 $8$ & $9.14800363$\\
$12$ & $9.12993037$\\
$16$ & $9.12475747$\\
$20$ & $9.12328038$\\
$24$ & $9.12292422$\\
$28$ & $9.12268475$\\
$32$ & $9.12254419$\\
\hline
\end{tabular}
\end{table}

\begin{figure}[tb]
\caption{The admissible value $\vec{a}$ for the case $m=32$ of Table~\ref{alceste}, yielding $\lambda=9.1225441\ldots$\,.}\label{semele}
\small
\begin{equation*}
\begin{split}
\vec{a} = \bigl(&1, 0.873189274511716, 0.570782794693574, 0.260015116563811,\\
& 0.0656922507590789, 2.67697182147118\cdot 10^{-6}, 7.44683564477752\cdot 10^{-13},\\
& 0.00696267550009933, 0.00419060975244452, 6.2011676780189\cdot 10^{-7},\\
& 1.40085037941165\cdot 10^{-6}, 0.00125084111895124, 0.000891793444870752,\\
& 2.96130352000425\cdot 10^{-10}, 4.45789856003874\cdot 10^{-6}, 0.000377374247406402,\\
& 0.000288379953409868, 2.328881379567\cdot 10^{-6}, 3.87293028193371\cdot 10^{-6},\\
& 0.000141925431550028, 0.000109824949671327, 8.60368523724788\cdot 10^{-10},\\
& 9.63285107422078\cdot 10^{-7}, 5.45973432309874\cdot 10^{-5}, 4.42734462821885\cdot 10^{-5},\\
& 6.98268956353718\cdot 10^{-6}, 2.09618267164813\cdot 10^{-6}, 1.37818763599687\cdot 10^{-5},\\
& 7.76556929147892\cdot 10^{-6}, 8.50464668334544\cdot 10^{-10}, 1.27987067045419\cdot 10^{-8},\\
& 1.50505483825263\cdot 10^{-6}, 6.83922632017741\cdot 10^{-8}\bigr).
\end{split}
\end{equation*}
\end{figure}

\begin{figure}[tb]
\caption{$S(\vec{a},\theta)$ over $[\pi/2,\pi]$ for $\vec{a}$ in Figure~\ref{semele}.}\label{galatea}
\begin{center}
\includegraphics[width=3.25in]{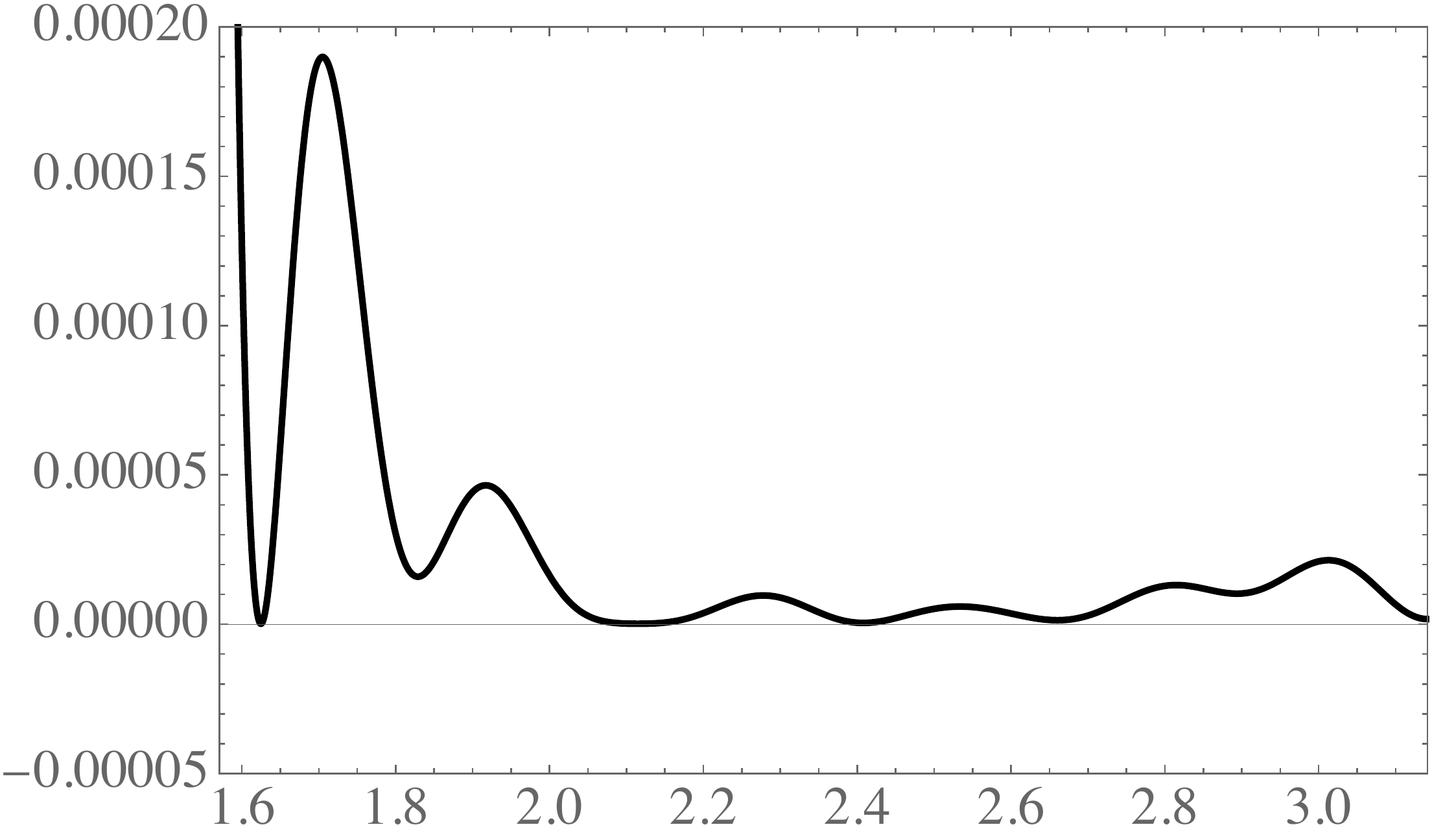}
\end{center}
\end{figure}

We remark that a similar simulated annealing procedure was employed in \cite{MT15} to determine a nonnegative real trigonometric polynomial to assist with the determination of an improved explicit zero-free region for the Riemann zeta function of the form  $\sigma>1-1/(R_0\log t)$, where $\sigma+it$ denotes a point in the complex plane.
While a different objective function was employed there, the polynomial of degree $16$ determined in that work qualifies in this problem as well, and in fact produces a very good value of $9.12725$ in \eqref{giulio}.

\section{Characters with small order}\label{creta}

We now turn to establishing \eqref{giulio} for primitive, non-quadratic Dirichlet characters of order at most $32$.
We note that it follows from the proof of \cite[Lem.\ 15]{Louboutin15} that if \eqref{hercules} holds for all $\theta = 2 \pi k / D$, $0 \leq k < D$ and for $1\leq d\leq m$, $d \mid D$, then $S(\vec{a},\chi,n)\geq0$ for all integers $n$ and all primitive non-quadratic Dirichlet characters $\chi$ with order $D$.

Thus, in view of Proposition~\ref{teseo} and Theorem~\ref{serse}, for each $d\leq32$ we require an admissible $\vec{a}\in\mathbb{R}^{m+1}$ with $m<d$ such that 
\begin{equation}\label{lotario}
S(\vec{a}, 2\pi k/d) \geq 0
\end{equation}
for all integers $k$, and where the associated value of $\lambda$ is at most $9.122685$.
Note we do not require that $S(\vec{a}, \theta) \geq 0$ for all real $\theta$: this additional freedom allows us to find qualifying vectors with smaller $\lambda$ values.
Also, by Proposition~\ref{teseo} (with $\theta_0=0$), we do not require a separate check that the values of $S_{d'}(\vec{a},2\pi k/d)$ from \eqref{admeto} with $d'>1$ are also nonnegative.

Louboutin exhibited qualifying polynomials for orders $d=3$, $4$, $5$, and $6$, and some initial searches using integer values of the coefficients $a_k$ allowed us to find simple qualifying examples for several additional values of $d$.
For smaller values of $d$, simple exhaustive searches with bounded coefficients sufficed.
For certain larger values, we found some success with an iterative procedure: first, find a moderately good value for $\lambda$ with a simple exhaustive search with bounded coefficients, then multiply each coefficient by a small integer, and search in the neighborhood of that point for improved $\lambda$ values.
These methods sufficed for $d\leq10$ and for $12\leq d\leq 16$, and our results are listed in Table~\ref{alessandro}.
We include Louboutin's examples for $3\leq d\leq6$ here as well for the convenience of the reader.

We required a different method for the remaining $d\leq32$.
For these, we adapted our simulated annealing procedure to determine qualifying polynomials.
In this case, we no longer require nonnegativity everywhere, so we remove the sequence $b_k$ from the procedure, and instead we operate on the $a_k$ values directly.
However, in order to apply our simulated annealing procedure, we must begin with a valid configuration, where $S(\vec{a}, 2\pi k/d) \geq 0$ for each $k$, and random selection of the $a_k$ is very unlikely to produce a solution with this property, especially for larger values of $d$.

We can resolve this by applying a preliminary round of simulated annealing, which behaves much like the procedure described in Section~\ref{floridante}, but with another objective function.
For a fixed integer $d$ and a selected value for $m<d$, we aim to minimize
\begin{equation}\label{atalanta}
-\sum_{k=0}^{d-1} \min\left\{0, S\left(\vec{a}, \frac{2\pi k}{d}\right)\right\}
\end{equation}
over admissible $\vec{a}\in\mathbb{R}^{m+1}$.
For each $d\leq32$, we set $m=d-1$, $a_0=1$, and select $a_1\in(1/2,1]$ and $a_i\in[0,1]$ for $2\leq i\leq m$ uniformly at random.
We then apply our simulated annealing procedure with the aim of minimizing the objective function \eqref{atalanta}, and halt as soon as we determine a vector for which this evaluates to $0$.
This determined a vector $\vec{a}$ with the property \eqref{lotario} very quickly for each required $d$.
We then apply the simulated annealing procedure from Section~\ref{floridante}, amended so that we adjust the values $a_i$ directly rather than the $b_i$, and so that we require that \eqref{lotario} remain true for each $k$ in every step.

Our results for each remaining order $d\leq32$ are summarized in Table~\ref{susanna}.
The real trigonometric polynomial $S(\vec{a},\theta)$ selected for the case $d=32$ is displayed in Figure~\ref{radamisto}, along with its values at $\pi k/16$ for integer $k$.

\begin{table}[tb]
\caption{Qualifying trigonometric polynomials for several orders $d\leq16$ having $\lambda<9.12$, determined using heuristic integer searches. (Values of $\lambda$ truncated.)}\label{alessandro}
\begin{tabular}{|c|l|c|}\hline
$d$ & $\vec{a}$ & $\lambda$\\\hline
$3$, $6$ & $(1, 1)$ & $3.72935$\\
$4$ & $(2, 2, 1)$ & $5.04595$\\
$5$ & $(1, 1, 1)$ & $6.38742$\\
$7$ & $(1, 1, 1, 1)$ & $9.06189$\\
$8$ & $(37, 31,18, 5)$ & $9.09363$\\
$9$ & $(22, 19, 12, 5)$ & $9.05800$\\
$10$ & $(26, 23, 16, 8, 2)$ & $9.10161$\\
$12$ & $(196, 169, 106, 44, 9)$ & $9.07653$\\
$13$ & $(93, 81, 53, 24, 6)$ & $9.07653$\\
$14$ & $(136, 120, 82, 40, 11, 0, 0, 1)$ & $9.11790$\\
$15$ & $(476, 426, 301, 157, 48, 0, 0, 13)$ & $9.11703$\\
$16$ & $(375, 324, 205, 87, 19, 0, 1, 1)$ & $9.11712$\\
\hline
\end{tabular}
\end{table}

\begin{figure}[tb]
\caption{The degree $31$ trigonometric polynomial $S(\vec{a},\theta)$ over $0\leq\theta\leq2\pi$ with $\vec{a}$ from Table~\ref{susanna} for the case $d=32$, and its (positive) values at the points $\theta=\pi k/16$ for $0\leq k\leq32$.}\label{radamisto}
\begin{center}
\includegraphics[width=3.25in]{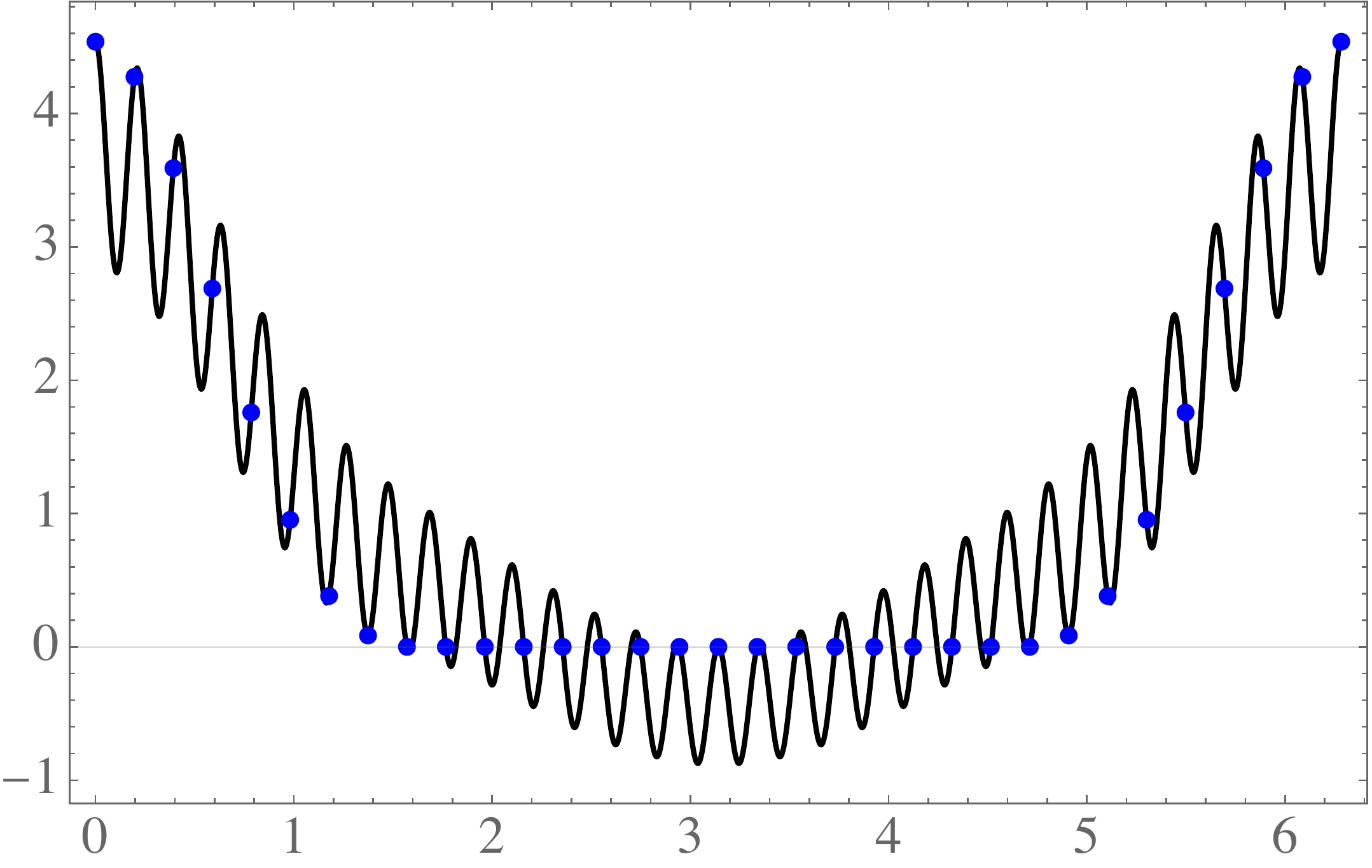}
\end{center}
\end{figure}

\section{Proof of Theorem~\ref{agrippina}}\label{ariodante}

We may now establish Theorem~\ref{agrippina}.

\begin{proof}[Proof of Theorem~\ref{agrippina}]
Our computations in Section~\ref{floridante} and~\ref{creta}, together with Theorem~\ref{serse} and Corollary~\ref{ottone}, establish \eqref{giulio}.
For \eqref{cesare}, Louboutin \cite[Lem.\ 3]{Louboutin15} noted that $F(s)$ in \eqref{ezio} is decreasing for $1<s\leq1.92326$.
Since $s(q)$ from \eqref{deidamia} is decreasing as well, if $q\geq q_0$ then $F(s(q))\geq F(s(q_0))$, so it suffices to compute $\abs{L(1,\chi)}$ for all primitive, non-quadratic Dirichlet characters $\chi$ up to a particular bound $q_0$ in order to obtain a result of the form \eqref{cesare}.
For \cite{Louboutin15}, D.~Platt performed this computation with $q_0=90000$; here Platt kindly extended this computation up to $q_0=10^6$
and determined that
\begin{equation}\label{rinaldo}
0.41292867 \leq \abs{L(1,\chi)}\log(q/\pi) \leq 1.91211802
\end{equation}
over this range, so that $\lambda=2.4218$ suffices for these characters.
\end{proof}

We remark that equality occurs on both sides of \eqref{rinaldo} when $\chi$ is one of the two primitive, non-quadratic characters with conductor $5$.
Platt's calculation employed interval arithmetic and required approximately $400$ core-hours, distributed over a number of $2.6$ GHz Intel Sandy Bridge processors.
We also remark that if one were to extend Platt's computation to $q_0=10^7$, and all $L(1,\chi)$ values remained in the appropriate range, then it would follow immediately that one could replace \eqref{cesare} with
\[
|L(1, \chi)| \geq \frac{1}{9.602277 \log(q/\pi)}
\]
for all $q\geq2$.

Finally, we add that we believe that the current method (without the assumption of GRH) does not allow for much improvement to \eqref{giulio} and \eqref{cesare}.
In particular, we think it is unlikely that one can obtain a value smaller than $9$ in \eqref{giulio} with the method described here.

\begin{longtable}[c]{|c|m{3.8in}|c|}
\caption{Qualifying trigonometric polynomials with degree $m=d-1$ for remaining orders $d\leq32$ having $\lambda<9.1224$, determined using simulated annealing. (Values of $\lambda$ truncated.)}\label{susanna}\\\hline
$d$ & $\vec{a}$ & $\lambda$\\\hline
$11$ & \raggedright\tiny($1$, $0.8925512693$, $0.1707136084$, $0.004260864165$, $0.1078912616$, $3.250649999\cdot10^{-6}$, $0.00001763400605$, $0.0009810003897$, $0.3264771169$, $0.4566384223$, $1.520660622\cdot10^{-6}$) & $9.10488$\\\hline
$17$ & \raggedright\tiny($1$, $0.8651903761$, $0.3013064921$, $0.1032699308$, $0.02640447437$, $8.921912248\cdot10^{-9}$, $0.01308300847$, $0.01303574003$, $7.517344907\cdot10^{-8}$, $5.678907221\cdot10^{-9}$, $0.001589918391$, $0.0009790550999$, $2.214327754\cdot10^{-8}$, $0.01602498072$, $0.1228661349$, $0.2446642912$, $1.414119993\cdot10^{-7}$) & $9.11238$\\\hline
$18$ & \raggedright\tiny($1$, $0.8795259248$, $0.2201432067$, $0.1463916545$, $0.04097288363$, $3.832829719\cdot10^{-8}$, $0.00008084343246$, $0.004829428594$, $0.004274473768$, $0.00001951196368$, $0.0008300131365$, $0.004533961979$, $1.514785772\cdot10^{-8}$, $8.130543691\cdot10^{-7}$, $0.03273466286$, $0.1311416916$, $0.3674170916$, $9.618625927\cdot10^{-7}$) & $9.08309$\\\hline
$19$ & \raggedright\tiny($1$, $0.8879449717$, $0.1811994811$, $0.2255380343$, $0.08398853634$, $5.781192245\cdot10^{-6}$, $4.658968571\cdot10^{-8}$, $0.01322595491$, $0.009787187061$, $5.449748813\cdot10^{-7}$, $1.203874072\cdot10^{-7}$, $0.003515548148$, $0.003373991071$, $5.408926667\cdot10^{-6}$, $1.008688798\cdot10^{-7}$, $0.002316767751$, $0.07726235198$, $0.4293208701$, $3.639386072\cdot10^{-7}$) & $9.10897$\\\hline
$20$ & \raggedright\tiny($1$, $0.8684793735$, $0.5520798472$, $0.0004657800957$, $0.002513205149$, $2.178244004\cdot10^{-6}$, $6.467026508\cdot10^{-7}$, $0.001591507432$, $2.770945051\cdot10^{-6}$, $2.959250305\cdot10^{-6}$, $0.0006662538523$, $1.714696968\cdot10^{-6}$, $0.0001138663591$, $0.001360366391$, $2.756159535\cdot10^{-8}$, $1.439010271\cdot10^{-7}$, $0.05588558022$, $0.2451096527$, $0.005757803016$, $6.296479921\cdot10^{-6}$) & $9.09358$\\\hline
$21$ & \raggedright\tiny($1$, $0.8736906308$, $0.2801126303$, $0.1522622045$, $0.03892071469$, $0.00003406146546$, $2.093505814\cdot10^{-7}$, $0.0005481592265$, $0.0005031715343$, $6.728004616\cdot10^{-7}$, $0.0002362233062$, $0.003413797906$, $2.743464351\cdot10^{-7}$, $0.0005011887291$, $0.004236607157$, $5.739640493\cdot10^{-6}$, $1.799141899\cdot10^{-7}$, $0.02608985671$, $0.1077289740$, $0.2915210411$, $6.917884324\cdot10^{-7}$) & $9.08552$\\\hline
$22$ & \raggedright\tiny($1$, $0.8799001501$, $0.3789556212$, $0.1444756650$, $0.04275889762$, $6.496417685\cdot10^{-7}$, $1.435096926\cdot10^{-6}$, $0.001902723418$, $0.001901166621$, $4.143534197\cdot10^{-6}$, $0.0002236874584$, $0.001042468369$, $3.442971584\cdot10^{-7}$, $0.00001192954997$, $0.004473973979$, $0.007902036128$, $3.382460700\cdot10^{-8}$, $2.897065037\cdot10^{-6}$, $0.03133401592$, $0.1339405520$, $0.2095351689$, $6.123047061\cdot10^{-8}$) & $9.09131$\\\hline
$23$ & \raggedright\tiny($1$, $0.8828844022$, $0.003798035369$, $0.09783570676$, $0.07524040736$, $5.152723017\cdot10^{-6}$, $1.164498815\cdot10^{-7}$, $0.006317412022$, $0.004683368792$, $8.804840105\cdot10^{-6}$, $8.571154308\cdot10^{-7}$, $0.001560150728$, $0.004065023111$, $1.561178361\cdot10^{-6}$, $4.545204678\cdot10^{-7}$, $0.004227138328$, $0.005993063925$, $3.425232822\cdot10^{-6}$, $7.069603870\cdot10^{-6}$, $0.003475731675$, $0.1897459082$, $0.5929177880$, $1.829964404\cdot10^{-6}$)
 & $9.11866$\\\hline
$24$ & \raggedright\tiny($1$, $0.8688729832$, $0.2956776096$, $0.1191086429$, $0.02804353535$, $1.220363244\cdot10^{-6}$, $8.725012271\cdot10^{-7}$, $0.002106193329$, $0.0006740124793$, $1.022927305\cdot10^{-6}$, $0.001131707796$, $0.0006950315492$, $4.299576093\cdot10^{-6}$, $0.0002016536663$, $0.0001620741594$, $1.216831008\cdot10^{-6}$, $0.0003524091083$, $0.001692912240$, $2.397679763\cdot10^{-6}$, $3.595072236\cdot10^{-7}$, $0.03139803864$, $0.1281788801$, $0.2634229981$, $6.372375880\cdot10^{-8}$)
 & $9.11108$\\\hline
 25 &
\raggedright\tiny($1$, $0.8748673118$, $0.1950196202$, $0.1059895960$, $0.03770393885$, $0.00001228276052$, $2.620584954\cdot10^{-6}$, $1.609173157\cdot10^{-6}$, $0.002850847785$, $1.757612884\cdot10^{-7}$, $0.00009453770686$, $0.0002759479928$, $5.231954909\cdot10^{-6}$, $0.00002821718967$, $0.0005583704073$, $0.00005344951615$, $1.218892906\cdot10^{-6}$, $0.001270935523$, $0.007153043561$, $1.485366673\cdot10^{-6}$, $1.572130607\cdot10^{-6}$, $0.02972238974$, $0.1582197112$, $0.3800666426$, $2.381539024\cdot10^{-6}$)
& 9.09925\\\hline
26 &
\raggedright\tiny($1$, $0.8742243889$, $0.2600128838$, $0.1293816056$, $0.01186410167$, $5.793915192\cdot10^{-6}$, $0.00001542514879$, $0.001658778705$, $0.002083552277$, $6.861147037\cdot10^{-8}$, $1.101750175\cdot10^{-6}$, $0.0004016998096$, $0.0005534858583$, $0.00004488693517$, $0.0003609497698$, $0.001079782646$, $0.00007335340012$, $9.088372024\cdot10^{-7}$, $0.002440634586$, $0.005742079883$, $2.165900802\cdot10^{-7}$, $0.00008624011096$, $0.05534320217$, $0.1336952018$, $0.3135980420$, $2.638649768\cdot10^{-7}$)
& 9.11877\\\hline
27 &
\raggedright\tiny($1$, $0.8807947909$, $0.3980844819$, $0.009777754833$, $0.06905801978$, $1.388915652\cdot10^{-6}$, $4.638261249\cdot10^{-7}$, $0.002510605981$, $0.002595168651$, $7.985582689\cdot10^{-8}$, $1.384925696\cdot10^{-6}$, $0.0004638397608$, $0.0008108685459$, $3.566052140\cdot10^{-7}$, $1.124444604\cdot10^{-6}$, $0.002656221184$, $0.003377825224$, $1.979613100\cdot10^{-6}$, $3.053250399\cdot10^{-7}$, $0.004831115452$, $0.008243806784$, $3.790260994\cdot10^{-8}$, $5.341945068\cdot10^{-6}$, $0.006487566572$, $0.2714277162$, $0.1928772315$, $1.274261542\cdot10^{-6}$)
& 9.12126\\\hline
28 &
\raggedright\tiny($1$, $0.8714855947$, $0.3663179884$, $0.07903215268$, $0.04641565762$, $8.231877496\cdot10^{-7}$, $4.873108215\cdot10^{-7}$, $0.005215163639$, $0.002827605566$, $2.121532764\cdot10^{-6}$, $0.0001787092675$, $0.0001071896439$, $0.0001986564610$, $4.205599683\cdot10^{-6}$, $0.00007967410139$, $1.204255330\cdot10^{-6}$, $6.791827042\cdot10^{-7}$, $0.0005650107449$, $2.660578341\cdot10^{-7}$, $8.046097178\cdot10^{-7}$, $0.0003555381829$, $0.0007432214142$, $0.00008107522875$, $2.570035041\cdot10^{-7}$, $0.01678788683$, $0.1759678728$, $0.1998822270$, $4.430517925\cdot10^{-7}$)
& 9.11357\\\hline
29 &
\raggedright\tiny($1$, $0.8737344202$, $0.3728121341$, $0.1388354606$, $0.01977132935$, $1.85446795\cdot10^{-6}$, $1.339281236\cdot10^{-7}$, $0.005097099705$, $0.002292951708$, $2.489493314\cdot10^{-7}$, $3.102568384\cdot10^{-6}$, $0.0007826150139$, $2.335663297\cdot10^{-6}$, $1.128589904\cdot10^{-6}$, $0.0002813042356$, $0.0006420074362$, $5.357218184\cdot10^{-7}$, $7.049865688\cdot10^{-5}$, $1.708384116\cdot10^{-6}$, $6.563682961\cdot10^{-7}$, $6.135278843\cdot10^{-7}$, $0.001607388119$, $0.001747965525$, $1.97981416\cdot10^{-7}$, $6.529877509\cdot10^{-7}$, $0.04639906817$, $0.122526693$, $0.1994805616$, $2.498427309\cdot10^{-7}$) &
$9.11014$\\\hline
30 &
\raggedright\tiny($1$, $0.8741757548$, $0.2190886383$, $0.1205608538$, $0.04432870594$, $1.065576781\cdot10^{-6}$, $3.173651405\cdot10^{-5}$, $0.001723185396$, $0.0002859065925$, $4.592199673\cdot10^{-6}$, $1.38659381\cdot10^{-6}$, $0.0003921001592$, $0.000242754326$, $3.407881947\cdot10^{-6}$, $6.557545669\cdot10^{-8}$, $0.0001259226757$, $1.798181138\cdot10^{-6}$, $1.986131458\cdot10^{-6}$, $0.0008340080575$, $0.001140693639$, $1.888491525\cdot10^{-7}$, $1.367160563\cdot10^{-6}$, $0.004364622043$, $0.00574934825$, $8.892347559\cdot10^{-7}$, $5.828895014\cdot10^{-7}$, $0.02270585484$, $0.1424700207$, $0.3544943584$, $1.355674121\cdot10^{-6}$) &
$9.12091$\\\hline
31 &
\raggedright\tiny($1$, $0.8708961238$, $0.2781783953$, $0.1351050822$, $0.03440019294$, $2.584243905\cdot10^{-6}$, $9.157456098\cdot10^{-7}$, $0.005852967769$, $0.0004171112266$, $5.56705523\cdot10^{-7}$, $3.869823028\cdot10^{-7}$, $0.0006800244848$, $0.000449875875$, $1.016149849\cdot10^{-6}$, $2.963865467\cdot10^{-7}$, $1.046285176\cdot10^{-6}$, $0.0001502505828$, $1.579708679\cdot10^{-6}$, $3.187102505\cdot10^{-6}$, $1.193308902\cdot10^{-5}$, $6.152423786\cdot10^{-5}$, $2.08259328\cdot10^{-6}$, $8.583615973\cdot10^{-7}$, $0.002963017473$, $0.0001017669839$, $2.213388921\cdot10^{-6}$, $2.347374191\cdot10^{-8}$, $0.02840063515$, $0.118647412$, $0.2865887524$, $7.504306267\cdot10^{-7}$) &
$9.12236$\\\hline
32 &
\raggedright\tiny($1$, $0.8717070569$, $0.2711900771$, $0.159573736$, $0.04166237321$, $8.537262209\cdot10^{-7}$, $6.050919503\cdot10^{-6}$, $0.001790952939$, $0.0005462328795$, $2.919551891\cdot10^{-7}$, $8.3294301\cdot10^{-7}$, $0.0005436228576$, $0.0001549815004$, $7.112764795\cdot10^{-7}$, $5.793902734\cdot10^{-5}$, $3.59725415\cdot10^{-7}$, $1.272517949\cdot10^{-6}$, $9.088342903\cdot10^{-5}$, $0.0001105150739$, $5.634257722\cdot10^{-7}$, $1.110676129\cdot10^{-5}$, $3.089507399\cdot10^{-6}$, $1.278155161\cdot10^{-5}$, $8.457119998\cdot10^{-7}$, $0.002864409168$, $0.004407135231$, $7.840012911\cdot10^{-5}$, $1.304461864\cdot10^{-7}$, $0.02209071361$, $0.09644403928$, $0.2957803607$, $1.50731231\cdot10^{-6}$) &
$9.11715$\\\hline
\end{longtable}

\section*{Acknowledgements}

We thank David Platt for his computations of $L(1,\chi)$ for primitive, non-quadratic Dirichlet characters with conductor at most $10^6$.
We also thank Michaela Cully-Hugill and Forrest Francis for helpful remarks, and for checking some of the calculations.

\begin{bibdiv}
\begin{biblist}

\bib{BW19}{article}{
   author={Balady, S.},
   author={Washington, L.~C.},
   title={A family of cyclic quartic fields with explicit fundamental units},
   journal={Acta Arith.},
   volume={187},
   date={2019},
   number={1},
   pages={43--57},
   issn={0065-1036},
   review={\MR{3884221}},
}

\bib{LLS15}{article}{
   author={Lamzouri, Y.},
   author={Li, X.},
   author={Soundararajan, K.},
   title={Conditional bounds for the least quadratic non-residue and related problems},
   journal={Math. Comp.},
   volume={84},
   date={2015},
   number={295},
   pages={2391--2412},
   issn={0025-5718},
   review={\MR{3356031}, \MR{3647972}},
   note={Corrigendum, ibid. \textbf{86} (2017), no. 307, 2551--2554.},
}

\bib{LangTru}{article}{
   author={Languasco, A.},
   author={Trudgian, T.~S.},
   title={Uniform effective estimates for $\abs{L(1,\chi)}$},
   pages={13 pp},
   date={Nov. 17, 2020},
   note={arXiv:2011.08348v1 [math.NT]},
}

\bib{Littlewood28}{article}{
   author={Littlewood, J.~E.},
   title={On the class-number of the corpus $P(\sqrt{-k})$},
   journal={Proc. London Math. Soc. (2)},
   volume={27},
   date={1928},
   number={5},
   pages={358--372},
   issn={0024-6115},
   review={\MR{1575396}},
}

\bib{Louboutin92}{article}{
   author={Louboutin, S.},
   title={Minoration au point $1$ des fonctions $L$ et d\'{e}termination des corps sextiques ab\'{e}liens totalement imaginaires principaux},
   journal={Acta Arith.},
   volume={62},
   date={1992},
   number={2},
   pages={109--124},
   issn={0065-1036},
   review={\MR{1183984}},
}

\bib{Louboutin04}{article}{
   author={Louboutin, S.~R.},
   title={Explicit upper bounds for $|L(1,\chi)|$ for primitive characters
   $\chi$},
   journal={Q.~J. Math.},
   volume={55},
   date={2004},
   number={1},
   pages={57--68},
   issn={0033-5606},
   review={\MR{2043007}},
}

\bib{Louboutin15}{article}{
   author={Louboutin, S.~R.},
   title={An explicit lower bound on moduli of Dirichlet $L$-functions at $s=1$},
   journal={J. Ramanujan Math. Soc.},
   volume={30},
   date={2015},
   number={1},
   pages={101--113},
   issn={0970-1249},
   review={\MR{3322753}},
}

\bib{MV}{book}{
   author={Montgomery, H.~L.},
   author={Vaughan, R.~C.},
   title={Multiplicative Number Theory, I: Classical Theory},
   series={Cambridge Stud. Adv. Math.},
   volume={97},
   publisher={Cambridge Univ. Press, Cambridge},
   date={2007},
   pages={xviii+552},
   isbn={978-0-521-84903-6},
   isbn={0-521-84903-9},
   review={\MR{2378655}},
}

\bib{MT15}{article}{
   author={Mossinghoff, M.~J.},
   author={Trudgian, T.~S.},
   title={Nonnegative trigonometric polynomials and a zero-free region for the Riemann zeta-function},
   journal={J. Number Theory},
   volume={157},
   date={2015},
   pages={329--349},
   issn={0022-314X},
   review={\MR{3373245}},
}

\bib{OSYT}{article}{
   author={Okumura, S.},
   author={Sugiyama, S.},
   author={Yasuda, M.},
   author={Takagi, T.},
   title={Security analysis of cryptosystems using short generators over ideal lattices},
   journal={Jpn. J. Ind. Appl. Math.},
   volume={35},
   date={2018},
   number={2},
   pages={739--771},
   issn={0916-7005},
   review={\MR{3816249}},
}

\bib{ParkKwon}{article}{
   author={Park, Y.-H.},
   author={Kwon, S.-H.},
   title={Determination of all imaginary abelian sextic number fields with class number $\leq 11$},
   journal={Acta Arith.},
   volume={82},
   date={1997},
   number={1},
   pages={27--43},
   issn={0065-1036},
   review={\MR{1475764}},
}

\bib{PE13}{article}{
   author={Platt, D.~J.},
   author={Saad Eddin, S.},
   title={Explicit upper bounds for $|L(1,\chi)|$ when $\chi(3)=0$},
   journal={Colloq. Math.},
   volume={133},
   date={2013},
   number={1},
   pages={23--34},
   issn={0010-1354},
   review={\MR{3139413}},
}

\bib{Ramare01}{article}{
   author={Ramar\'{e}, O.},
   title={Approximate formulae for $L(1,\chi)$},
   journal={Acta Arith.},
   volume={100},
   date={2001},
   number={3},
   pages={245--266},
   issn={0065-1036},
   review={\MR{1865385}},
}

\bib{Ramare04}{article}{
   author={Ramar\'{e}, O.},
   title={Approximate formulae for $L(1,\chi)$, II},
   journal={Acta Arith.},
   volume={112},
   date={2004},
   number={2},
   pages={141--149},
   issn={0065-1036},
   review={\MR{2051374}},
}

\end{biblist}
\end{bibdiv}

\end{document}